\newcommand{\R}{\mathbb{R}}
\newcommand{\un}{\mathbf{1}\!\!{\rm I}} 
\newcommand{\be}{\begin{equation}} 
\newcommand{\ee}{\end{equation}}
\newcommand{\bea}{\begin{eqnarray}} 
\newcommand{\eea}{\end{eqnarray}}
\newcommand{\bean}{\begin{eqnarray*}} 
\newcommand{\eean}{\end{eqnarray*}}
\newcommand{\rf}[1]{(\ref {#1})}
\def\dx{\,{\rm d}x}
\def\dy{\,{\rm d}y}
\def\dt{\,{\rm d}t}
\def\dr{\,{\rm d}r}
\def\dta{\,{\rm d}\tau}
\def\e{\varepsilon}
\def\s{\sigma}
\def\g{\gamma}
\def\r{\varrho}
\def\xn{|\!|\!|}
\def\mn{|\!\!|}
\def\mn2{|\!\!|_{M^{d(p-1)/2}}}
\newtheorem{theorem}{Theorem}
\newtheorem{corollary}[theorem]{Corollary}
\theoremstyle{definition}
\def\qed{\hfill $\square$}
\theoremstyle{remark}
\newtheorem{remark}[theorem]{Remark}
\author[P. Biler]{Piotr Biler}
\address{\small Instytut Matematyczny, Uniwersytet Wroc\l awski,
 pl. Grunwaldzki 2/4, \hbox{50-384} Wroc\-\l aw, Poland}
\email{Piotr.Biler@math.uni.wroc.pl}
\title[Nonlinear heat equation]{Blowup  versus global in time existence \\ of  solutions for nonlinear heat equations}
\begin{document}

\begin{abstract} 
This note is devoted to a simple proof of blowup of solutions for a nonlinear heat equation. 
The criterion for a blowup is  expressed in terms of a Morrey space norm and is in a sense complementary to conditions guaranteeing the global in time existence of solutions. 
The method goes back to H. Fujita and  extends to other nonlinear parabolic equations. 
\end{abstract}

\keywords{nonlinear heat equation, blowup of solutions, global existence of solutions}

\subjclass[2010]{35B44, 35K55}

\date{\today}

\thanks{The author, partially  supported by the NCN grant  {
2013/09/B/ST1/04412}, thanks Ignacio Guerra for interesting conversations leading to revisiting Fujita's method and Philippe Souplet for many pertinent remarks. }

\maketitle
\begin{center}
\emph{In memory of  Marek Burnat}
\medskip

\tiny{submitted to the special volume of {\em Topological Methods in Nonlinear Analysis}}
\bigskip
\bigskip

\end{center}

\baselineskip=18pt

\noindent {\bf Introduction}

\noindent 
We consider in this paper the Cauchy problem for the simplest example of semilinear parabolic equation in $\R^d$, $d\ge 1$, $p>1$,  
\bea
u_t&=&\Delta u+|u|^{p-1}u,\ \ x\in {\mathbb R}^d,\ t>0,\label{nlh}\\ 
u(x,0)&=&u_0(x)\label{ini}.
\eea
This problem has been thoroughly studied beginning with \cite{Fu,GK1,GK2}, and many fine properties of its solutions are known. 
For the reference, see the extensive monograph \cite{QS} and, in particular, a recent paper \cite{S}.

The purpose of this note is to give a short proof of nonexistence of global in time (and sometimes also local in time) positive solutions to problem \rf{nlh}--\rf{ini} together with criteria for blowup expressed in terms of the norms of critical Morrey spaces.  
Those criteria are, in a sense, complementary to assumptions guaranteeing the global in time existence of solutions. 
The idea of the proof goes back to the seminal paper \cite{Fu} but the straightforward connection to the Morrey spaces norms seems to be missed out  
up to now. 
An idea in \cite[proof of Th. 3]{BCKSV} in the context of radial solutions of the Keller-Segel model of chemotaxis is also reminiscent of that. 
Recently, this approach has been systematically developed for (radially symmetric solutions of) the chemotaxis models with different diffusion operators in \cite{BKZ3} as an alternative  to other proofs based on considerations of (new) moments in \cite{BK-JEE,BKZ,BKZ-NHM,BCKZ,BKZ2}. 
Here, for the nonlinear heat equation generally we do not use geometric assumptions on solutions such as radial symmetry. 
This method classically introduced by \cite{Fu} is flexible enough, and extends also to other nonlinear parabolic problems, see remarks at the end of this paper. 
\bigskip
 
 Whenever   problem \rf{nlh}--\rf{ini} admits a singular stationary solution, this  plays an important role in determining when solutions with $u_0$ featuring  singularities either lead to global in time solutions or they blow up in a finite time. 
 The form of this solution is well known, we recall this below.

\begin{theorem}[Singular stationary solutions]\label{sing-sol}
For $d\ge 3$ and $p>\frac{d}{d-2}$ the function 
\be
u_C(x)=c|x|^{-\frac{2}{p-1}}\label{sing}
\ee
with the constant 
\be
c=\left( \frac{2}{p-1}\left(d-\frac{2p}{p-1}\right)\right)^{\frac{1}{p-1}}
\ee 
is a stationary positive (weak and pointwise) solution of equation \rf{nlh}. 
\end{theorem}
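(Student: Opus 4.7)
The statement is the standard verification that the homogeneous radial profile $u_C$ satisfies the elliptic equation $\Delta u + u^p = 0$ on $\R^d\setminus\{0\}$ and, thanks to the growth exponents, even in the distributional sense on all of $\R^d$. I would organise the calculation in two stages.

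\noindent\textbf{Stage 1 (pointwise).} Set $\alpha=2/(p-1)$ and work with the radial function $f(r)=r^{-\alpha}$ on $r>0$. Using the radial Laplacian
\[
\Delta f = f''(r) + \frac{d-1}{r}\,f'(r),
\]
a one-line computation gives $\Delta(|x|^{-\alpha}) = \alpha(\alpha+2-d)\,|x|^{-\alpha-2}$. Plugging $u_C=c|x|^{-\alpha}$ into $\Delta u + u^p$ yields
\[
c\,\alpha(\alpha+2-d)\,|x|^{-\alpha-2} + c^{p}\,|x|^{-\alpha p}.
\]
The exponents match exactly because $\alpha p = \alpha+2$ is equivalent to $\alpha(p-1)=2$, which is how $\alpha$ was chosen. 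Equating the coefficients forces
\[
c^{p-1} = \alpha(d-2-\alpha) = \frac{2}{p-1}\left(d-\frac{2p}{p-1}\right),
\]
which is the formula announced in the theorem. The condition $p>d/(d-2)$ is precisely what guarantees $d-\tfrac{2p}{p-1}>0$, so $c$ is well defined and positive, and $u_C>0$ on $\R^d\setminus\{0\}$.

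\noindent\textbf{Stage 2 (weak interpretation at the origin).} One still has to show that the pointwise identity upgrades to a distributional identity across the singularity. I would first check the local integrability bookkeeping: $u_C\in L^1_{\rm loc}(\R^d)$ requires $\alpha<d$, and $u_C^{p}\in L^1_{\rm loc}(\R^d)$ requires $\alpha p<d$, i.e.\ $\tfrac{2p}{p-1}<d$, which is again the standing hypothesis $p>d/(d-2)$. Then, for $\varphi\in C_c^\infty(\R^d)$, I would write
\[
\int_{\R^d} u_C\,\Delta\varphi\dx = \lim_{\e\to 0}\int_{|x|>\e} u_C\,\Delta\varphi\dx,
\]
integrate by parts on $\{|x|>\e\}$, and show that the boundary terms on $\{|x|=\e\}$ vanish as $\e\to 0$ (they are $O(\e^{d-1-\alpha-1})$ and $O(\e^{d-1-\alpha})$, both positive exponents since $\alpha<d-2$). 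The remaining interior term combines with $\int u_C^{p}\varphi$ via the pointwise identity to give zero.

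\noindent\textbf{Main obstacle.} The computation itself is routine; the only step that requires a little care is the $\e\to 0$ limit in Stage 2, where one has to track the two surface integrals and see that the Hardy--type exponent $\alpha=2/(p-1)<d-2$ (equivalent to $p>d/(d-2)$) is exactly what kills both boundary contributions. Once that is checked, the weak and pointwise interpretations coincide and the theorem is proved.
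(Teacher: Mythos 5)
Your proof is correct and takes essentially the same route as the paper: determine the exponent from $\alpha p=\alpha+2$, fix $c$ from the coefficient identity $c^{p-1}=\alpha(d-2-\alpha)=\frac{2}{p-1}\left(d-\frac{2p}{p-1}\right)$, and use $\alpha p<d$ (equivalent to $p>d/(d-2)$) to pass to the distributional formulation. The paper's own proof is simply a terser version of your Stage~1 together with the one-line observation that $p\gamma<d$ yields the weak interpretation; your Stage~2 bookkeeping of the surface terms is a standard elaboration of that remark.
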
 

\begin{proof}
The exponent $\g=\frac{2}{p-1}$ is uniquely determined by the requirement $\g+2=p\g$, and the constant $c$ is determined by the relation 
\be 
c^{p-1}=\g(d-p\g)=\g(d-2-\g).\label{c=}
\ee
 Since $p\g<d$, this is a distributional solution of equation \rf{nlh}. 
\end{proof}

\bigskip

A typical example of a 
global in time existence result is the following 

\begin{theorem}\label{global}
Suppose that $d\ge 3$, $p>\frac{d}{d-2}$, and $u_0$ satisfies the estimate 
\be
0\le u_0(x)< \delta u_C(x) = \delta c|x|^{-\frac{2}{p-1}}\label{sub}
\ee 
for some $\delta<1$. Then any solution $u\in{\mathcal C}^2(\R^d\times[0,T])$ of problem \rf{nlh}--\rf{ini} with the property 
 $\lim_{|x|\to\infty}|x|^{\frac{2}{p-1}}u(x,t)=0$ satisfied uniformly on $[0,T]$,   exists globally in time and is bounded by $u_C$ 
\be
0\le u(x,t)< \delta u_C(x)\label{subb}
\ee 
for each $t>0$ and $x\in\R^d$. 
\end{theorem}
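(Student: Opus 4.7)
The plan is to exhibit $w:=\delta u_C$ as a strict classical supersolution of \rf{nlh} on $\R^d\setminus\{0\}$, apply the parabolic comparison principle on annular cylinders $(B_R\setminus\overline{B_r})\times[0,T]$, and then pass to the limits $r\to 0^+$, $R\to\infty$.

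First I would verify the supersolution property. Using the identity $c^{p-1}=\gamma(d-2-\gamma)$ from Theorem~\ref{sing-sol} with $\gamma=\frac{2}{p-1}$, a direct computation of $\Delta|x|^{-\gamma}=-\gamma(d-2-\gamma)|x|^{-\gamma-2}$ gives $\Delta u_C=-u_C^p$ on $\R^d\setminus\{0\}$, whence
\be
w_t-\Delta w-w^p=\delta u_C^p-\delta^p u_C^p=\delta(1-\delta^{p-1})u_C^p>0,
\ee
since $0<\delta<1$ and $p>1$.

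Next I would fix $0<r<R$ and compare $u$ with $w$ on $Q_{r,R}:=(B_R\setminus\overline{B_r})\times[0,T]$. The strict inequality $u<w$ on the parabolic boundary is checked piecewise: on $\{t=0\}$ it is precisely \rf{sub}; on $\{|x|=r\}$ the ${\mathcal C}^2$-regularity of $u$ on $\R^d\times[0,T]$ keeps $u$ uniformly bounded, while $w=\delta c\,r^{-2/(p-1)}$ is arbitrarily large for small $r$; on $\{|x|=R\}$ the uniform decay $|x|^{2/(p-1)}u(x,t)\to 0$ forces $R^{2/(p-1)}u<\delta c$ for large $R$. Setting $v:=u-w$, I would observe
\be
v_t-\Delta v\le u^p-w^p=a(x,t)\,v,\qquad a(x,t):=\frac{u^p-w^p}{u-w},
\ee
with $a$ bounded on $\overline{Q_{r,R}}$. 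The parabolic maximum principle, in its strong form and exploiting the strict sign on the boundary, then yields $v<0$ on $Q_{r,R}$.

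Passing to $r\to 0^+$ and $R\to\infty$ produces the pointwise bound \rf{subb} on $(\R^d\setminus\{0\})\times[0,T]$, uniformly in $T$. Because $\delta u_C$ has finite, time-independent norm in the critical Morrey space $M^{d(p-1)/2}$, this translates into a time-independent Morrey control for $u(\cdot,t)$, and standard continuation criteria extend the local ${\mathcal C}^2$ solution globally in time. The main technical obstacle is the singularity of $u_C$ at the origin, which prevents a direct application of the maximum principle on all of $\R^d$; the annular construction combined with the ${\mathcal C}^2$-regularity of $u$ (which keeps $u$ bounded where $w$ diverges) is what makes the comparison rigorous, and no geometric assumption on $u$ such as radial symmetry is invoked.
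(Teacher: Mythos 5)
Your argument is correct and is in substance the same comparison-principle proof as the paper's. The paper packages the comparison with the strict supersolution $\delta u_C$ as a first-touching-point argument for the weighted function $z=|x|^{2/(p-1)}u$ of \rf{z}: since $u\in{\mathcal C}^2$ forces $z\to 0$ at the origin and the hypothesis gives uniform decay of $z$ at infinity, the first time $z$ reaches the level $\delta c$ is attained at an interior point, where the computation \rf{der} (resting on the same identity $\Delta u_C+u_C^p=0$, i.e.\ \rf{c=}, that you use) gives a contradiction. Your annular truncation with the two boundary estimates on $\{|x|=r\}$ and $\{|x|=R\}$ is exactly the same localization, realized as a classical supersolution comparison for $v=u-\delta u_C$; the strict inequality $w_t-\Delta w-w^p>0$ plays the role of the strict inequality in \rf{der}. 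The one place where your write-up is weaker than it should be is the final continuation step: boundedness of the critical Morrey norm $M^{d(p-1)/2}(\R^d)$ is \emph{not} a standard continuation criterion (continuation from bounded critical norms is a deep matter, not an off-the-shelf fact), so that sentence does not do the job as stated. What actually closes the argument --- and what the paper sketches --- is that the bound \rf{subb} is reproduced at $t=T$ with the same $\delta<1$ and the same decay property, so one restarts the local existence theory from $u(\cdot,T)$, reapplies the comparison on $[T,T+h]$, and iterates to cover $[0,\infty)$; your pointwise conclusion already supplies everything needed for that iteration.
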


\begin{proof}
This result can be considered as a kind of comparison principle for equation \rf{nlh} when a subcritical solution $u$, $0\le u(x,t)<\delta u_C(x)$ (continuous off the origin with a proper decay at infinity) is compared with $\delta u_C$, and then this can be  continued  onto some interval $[T,T+h]$, and further, step by step, onto the whole half-line $[0,\infty)$. 
For analogous results in chemotaxis theory with either Brownian or fractional diffusion, see \cite{BKZ2,BKZ3}. 

We sketch the proof skipping some technical details related to the local existence of solutions with the assumed regularity and decay. 
Suppose {\em a contrario} that $u(x_0,t_0) =\delta u_C(x_0)$ for some $x_0\in\R^d$ with minimal $|x_0|$ and the least $t_0>0$. 
Consider the auxiliary  function 
\be
z(x,t)=|x|^{\frac{2}{p-1}}u(x,t). \label{z}
\ee
Under the {\em a contrario}  assumption $z(x_0,t_0)=\delta c$,  \  
and if $t_0$ is the first moment when $z(x,t)$ hits the level $\delta c\in\R$, we have 
\be 
\nabla z(x_0,t_0)=0,\ \ \Delta z(x_0,t_0)\le 0,\label{grad}
\ee 
since $x_0$ is the point where the maximum of $z$ (equal to $\delta c$) is attained. Let us compute 
\bea
\frac{\partial}{\partial t}z(x_0,t)\big|_{t=t_0}&=& 
|x_0|^{\frac{2}{p-1}}(\Delta u+u^p)\nonumber\\
&=&|x_0|^{\frac{2}{p-1}}\left(\Delta(|x|^{-\frac{2}{p-1}}z(x,t_0))\big|_{x=x_0} + |x_0|^{-\frac{2}{p-1}}z(x_0,t_0)^p\right)\nonumber\\
&=&\left(\Delta z(x_0,t_0)-c^{p-1}z(x_0,t_0) +z(x_0,t_0)^p\right)\nonumber\\ 
&\le& z(x_0,t_0)(z(x_0,t_0)^{p-1}-c^{p-1})<0\label{der}
\eea
since
\bea
\nabla(|x|^{-\g}z)&=&|x|^{-\g}\nabla z-\g|x|^{-\g-2}x\,z,\nonumber\\ 
\Delta(|x|^{-\g}z)&=&|x|^{-\g}\Delta z-2\g|x|^{-\g-2}x\cdot \nabla z+\g(\g+2-d)z,\nonumber
\eea
relations \rf{grad} hold, and recall \rf{c=} to see that $\g(\g+2-d)=-c^{p-1}$.  The inequality $\frac{\partial}{\partial t}z(x_0,t)\big|_{t=t_0}<0$   contradicts the assumption that $z$ hits for the first time the {\em constant} level $\delta c$ at $t=t_0$. 
\end{proof}

\begin{remark}\label{Noriko} 
This kind of result is not, of course,  new but the proof seems be somewhat novel. 
If $u_0$ is radially symmetric and $u_0(x)< \delta u_C(x)$ for some $\delta<1$,  then the solution of \rf{nlh}--\rf{ini} exists globally in time, see \cite[Theorem 1.1]{M1} and also \cite[Remark 3.1(iv)]{S}. Related results are in \cite[Lemma 2.2]{M2}, and stability of the singular solution is studied in \cite{PY1}.   
Results for not necessarily radial solutions starting either below or slightly above the singular solution $u_C$ are in \cite[Th. 10.4]{GV} (reported in \cite[Th. 20.5]{QS}),  and in \cite[Th. 1.1]{SW}. Note that solutions of the Cauchy problem \rf{nlh}--\rf{ini} in the latter case are nonunique. 
\end{remark} 

\bigskip

\noindent
 {\bf Solutions exploding in a finite time} 

\noindent
Our goal here is to show a finite time blowup, 
and that the critical size of some functional norm of initial data leading to a blowup is close to the optimal size  of this norm  guaranteeing the existence of a global in time regular positive solution. 
The first argument is essentially that of \cite{Fu}. 
The  considerations in \cite{GK1,GK2} employed some (quite complicated) moments and energies of solutions with Gaussian weights but not exactly quantity \rf{moment-G}.

    \begin{theorem}\label{blow}
Suppose that $u_0\ge 0$ satisfies the condition 
\be
\sup_{T>0}T^{\frac{1}{p-1}}\left\| {\rm e}^{T\Delta}u_0\right\|_\infty>  \left(\frac{1}{p-1}\right)^{\frac{1}{p-1}}.\label{Besov}
\ee 
Then, any local in time weak solution $u=u(x,t)$ of the Cauchy problem \rf{nlh}--\rf{ini} with $p>1$ cannot be continued beyond $t=T$. 
    \end{theorem}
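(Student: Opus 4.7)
My plan is to adapt Fujita's classical idea by testing the equation against the backward heat kernel based at a well-chosen point, converting the PDE into a scalar Bernoulli-type differential inequality whose blowup time can be directly compared with $T$. Because $u\ge0$ and the nonlinearity is nonnegative, any local weak (mild) solution of \rf{nlh}--\rf{ini} satisfies Duhamel's representation
$$u(\cdot,t)=e^{t\Delta}u_0+\int_0^t e^{(t-s)\Delta}\bigl(u(s)^p\bigr)\ds,$$
which will be the only ingredient from the equation that I use.

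Fix any $T>0$ for which \rf{Besov} holds, and pick $x_0\in\R^d$ with
$$T^{1/(p-1)}\bigl(e^{T\Delta}u_0\bigr)(x_0)>\left(\frac{1}{p-1}\right)^{1/(p-1)},$$
which the strict inequality in \rf{Besov} permits (with a maximizing sequence substituting for $x_0$ if the sup is not attained, or by replacing $T$ by a slightly smaller $T'<T$). Define
$$h(t)=\bigl(e^{(T-t)\Delta}u(\cdot,t)\bigr)(x_0),\qquad 0\le t<T.$$
Applying $e^{(T-t)\Delta}$ to the Duhamel identity and using the semigroup property one gets
$$h(t)=(e^{T\Delta}u_0)(x_0)+\int_0^t\bigl[e^{(T-s)\Delta}(u(s)^p)\bigr](x_0)\ds,$$
so that $h'(t)=\bigl[e^{(T-t)\Delta}(u(t)^p)\bigr](x_0)$. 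Because the heat kernel is a probability measure and $y\mapsto y^p$ is convex on $[0,\infty)$, Jensen's inequality yields
$$h'(t)\ge\bigl[e^{(T-t)\Delta}u(t)\bigr](x_0)^p=h(t)^p.$$

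Integrating the Bernoulli inequality $h'\ge h^p$ with $h(0)=(e^{T\Delta}u_0)(x_0)$ gives blowup of $h$ no later than $t^{*}=\bigl((p-1)h(0)^{p-1}\bigr)^{-1}$; the choice of $x_0$ translates precisely into $t^{*}<T$. Since $e^{(T-t)\Delta}$ is an $L^\infty$-contraction, $h(t)\le\|u(\cdot,t)\|_\infty$, so $\|u(\cdot,t)\|_\infty$ must explode strictly before $T$, and the solution cannot be continued up to, let alone past, $t=T$.

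The main obstacle I anticipate is the rigorous justification of the Jensen step and of the differentiation of $h$ for a merely weak solution: one needs $u(t)^p$ to be finite and integrable against the Gaussian kernel centered at $x_0$, and the mild formulation to hold pointwise at $x_0$. Assuming the standard notion of nonnegative weak/mild solution this is automatic from the monotone convergence argument applied to Duhamel's formula (truncate $u$ and pass to the limit). The only other routine concern is the non-attainment of the $L^\infty$ supremum, resolved as indicated above by a negligible loss in the threshold constant.
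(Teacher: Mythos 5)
Your argument is correct and is essentially the paper's own proof: the quantity $h(t)=\bigl(e^{(T-t)\Delta}u(\cdot,t)\bigr)(x_0)$ is exactly the Gaussian moment $W(t)$ of \rf{moment-G} (centered at $x_0$ rather than at the origin, which the paper handles by translation invariance), and both arguments reduce to the same Bernoulli inequality $W'\ge W^p$ via Jensen/H\"older against the unit-mass backward heat kernel. The only cosmetic difference is that you differentiate the Duhamel formula while the paper tests the weak formulation against $G$ solving $G_t+\Delta G=0$; these are equivalent computations.
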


\begin{proof}
Note that here  we assume merely $p>1$. 
For a fixed $T>0$ consider the weight function $G=G(x,t)$ which solves the backward heat equation with the  unit Dirac measure as the final time condition at $t=T$
\be 
G_t+\Delta G=0,\ \ \ G(.,T)=\delta_0.\label{G}
\ee 
Clearly, we have a solution 
\be
G(x,t)=(4\pi(T-t))^{-\frac{d}{2}}\exp\left(-\frac{|x|^2}{4(T-t)}\right),\label{G} 
\ee
the unique nonnegative one, satisfying moreover 
\be
\int_{\R^d} G(x,t)\dx=1\ \ {\rm for\ each}\ \ t\in[0,T).\label{L1}
\ee
We  consider $t\in[0,T)$, and define for a solution $u$ of \rf{nlh}--\rf{ini} which is supposed to  exist on $[0,T)$ the moment 
\be
W(t)=\int_{\R^d} G(x,t)u(x,t)\dx={\rm e}^{(T-t)\Delta}u(t)(0), \label{moment-G}
\ee
where ${\rm e}^{t\Delta}$, $t>0$,  denotes the heat semigroup on $\R^d$ defined with the Gauss--Weierstrass kernel.
Evidently, we have 
\be
W(0)\in[0,\infty)\ \ {\rm  for}\ \  u_0\ge 0, \ \ {\rm and}\ \ W(0)\in(0,\infty)\ \ {\rm if}\ \  0\le u_0\not\equiv 0\label{W-pos}
\ee and, moreover,  
\be
W(0)={\rm e}^{T\Delta}u_0(0).\label{W0}
\ee
Since $G$ decays exponentially in $x$ as $|x|\to\infty$, the moment $W$ is well defined (at least) for (weak, pointwise, distributional)  solutions $u=u(x,t)$ which are polynomially bounded in $x$ as $|x|\to\infty$. 
The evolution of the moment $W$ is governed by the identity 
\bea
\frac{{\rm d}W(t)}{\dt}&=& \int_{\R^d}(Gu_t+G_tu)\dx\nonumber\\
&=&\int_{\R^d} G(\Delta u+u^p)\dx-\int_{\R^d} \Delta G\, u\dx \nonumber\\
&=&\int_{\R^d} \Delta G\, u\dx+\int_{\R^d} Gu^p \dx-\int_{\R^d} \Delta G\, u\dx \nonumber\\
&=&\int_{\R^d} Gu^p\dx\nonumber\\
&\ge& \left(\int_{\R^d} Gu\dx\right)^p,\label{W-ev}
\eea 
where the third line means that $u$ is a weak solution, and the last line follows by the H\"older inequality and property \rf{L1}. 
Now, the differential inequality 
\be
\frac{{\rm d}W(t)}{\dt}\ge (W(t))^p\label{W-p}
\ee 
(with the strict inequality for nonconstant functions $u(t)$)
shows that $W(t)$ increases, and after integrating immediately leads to 
$$
W(0)^{1-p}-W(t)^{1-p}\ge (p-1)t
$$
for all $t\in[0,T)$, so that 
\be
W(t)\ge \left(W(0)^{1-p}-(p-1)t\right)^{-\frac{1}{p-1}}\label{Wt}
\ee 
again for all $t\in[0,T)$. 
Now, passing to the limit $t\nearrow T$,  it is clear that if 
\be
W(0)={\rm e}^{T\Delta}u_0(0)> \left((p-1)T\right)^{-\frac{1}{p-1}},\label{Wc}
\ee
then we arrive at a contradiction with   property \rf{W-pos}, so that the solution blows up not later than $t=T$. 
In other words, if the condition 
\be
T^{\frac{1}{p-1}}{\rm e}^{T\Delta}u_0(0)>  \left(\frac{1}{p-1}\right)^{\frac{1}{p-1}}
\label{blo}
\ee 
is satisfied, then the solution $u$ cannot exist for $t=T$.  
By the translational invariance of equation \rf{nlh}, for positive $u_0$ condition \rf{blo} is equivalent  to inequality \rf{Besov}. 
\end{proof}

\begin{remark}\label{big-data} 
Observe that for any positive initial condition $u_0\not\equiv 0$ there is $N>0$ such that \rf{Wc} is satisfied for $Nu_0$. 
Condition \rf{Wc} is also valid for each constant $u_0>0$ and  suitably large $T$.
\end{remark}

\begin{remark}\label{Fujita}
It is clear that if $d<\frac{2}{p-1}$, then each positive $u_0\not\equiv 0$ leads to a blowing up solution, as it has been proved  in \cite[Th. 1]{Fu}. Indeed, 
\bea
\sup_{T>0}T^{\frac{1}{p-1}}\|{\rm e}^{T\Delta}u_0\|_\infty&=&\lim_{T\to\infty}T^{\frac{1}{p-1}}\|{\rm e}^{T\Delta}u_0\|_\infty\nonumber\\
&=&\lim_{T\to\infty}T^{-\frac{d}{2}+{\frac{1}{p-1}}}(4\pi)^{-\frac{d}{2}}\int_{\R^d}{\rm e}^{-|x|^2/4T}u_0(x)\dx\nonumber\\
&=&\lim_{T\to\infty}T^{-\frac{d}{2}+{\frac{1}{p-1}}}(4\pi)^{-\frac{d}{2}}\|u_0\|_1=\infty,\nonumber
\eea
so a sufficient condition \rf{Besov} for blowup holds. 
Here, we offer a proof of the analogous result if $d=\frac{2}{p-1}$, an alternative to the one given in \cite[Th. 1]{W}. 
Since condition \rf{Besov} for $\frac{d}{2}=\frac{1}{p-1}$ reads \be
(4\pi)^{-\frac{d}{2}}\int_{\R^d}{\rm e}^{-|y|^2/4T}u_0(y)\dy>\left(\frac{1}{p-1}\right)^{\frac{1}{p-1}},\label{spec}
\ee
 and $\lim_{T\to\infty}W(0)=(4\pi)^{-\frac{d}{2}}\|u_0\|_1$, 
it suffices to show that $\|u(t)\|_1$ becomes  large enough for some $t\ge 0$, and replace the initial condition $u_0$ by $u(t)$.  

Problem \rf{nlh}--\rf{ini} for positive solutions can be rewritten in the mild form as 
\be
u(t)={\rm e}^{t\Delta}u_0+\int_0^t{\rm e}^{(t-\tau)\Delta}u(\tau)^p\dta.\label{Duh}
\ee 
Let us write the Duhamel formula \rf{Duh} for $t=T$ in a more detailed way as 
\bea
u(x,T)-{\rm e}^{T\Delta}u_0(x)&=& \int_0^T\int_{\R^d} (4\pi(T-\tau))^{-\frac{d}{2}}{\rm e}^{-|x-y|^2/4(T-\tau)}u(y,s)^p\dy\ \dta\nonumber\\ 
&\ge&\int_0^T\left(\int_{\R^d}(4\pi(T-\tau))^{-\frac{d}{2}}\frac12\left( {\rm e}^{-|x-y|^2/4(T-\tau)}+{\rm e}^{-|x+y|^2/4(T-\tau)}\right) u(y,\tau)\dy\right)^p\dta\label{2s}\\
&\ge&\int_0^T\left(\int_{\R^d}(4\pi(T-\tau))^{-\frac{d}{2}}{\rm e}^{-|x|^2/4(T-\tau)}{\rm e}^{-|y|^2/4(T-\tau)}u(y,\tau)\dy\right)^p\dta\nonumber
\eea
by the H\"older inequality and by the Cauchy inequality  $\frac12\left({\rm e}^{x\cdot y/2(T-\tau)}+{\rm e}^{-x\cdot y/2(T-\tau)}\right)\ge 1$. 
Now, integrating over $x\in\R^d$, we obtain as a consequence of \rf{2s}
\bea
\|u(T)\|_1&\ge&\|u_0\|_1+\int_0^T\int_{\R^d}{\rm e}^{-p|x|^2/4(T-\tau)}W(\tau)^p\dx\dta\nonumber\\ 
&\ge& \|u_0\|_1+W(0)^p\int_0^T\left(\frac{4\pi}{p}\right)^{\frac{d}{2}}(T-\tau)^{\frac{d}{2}}\dta\nonumber\\
&\ge&\|u_0\|_1+W(0)^pT^{\frac{d}{2}+1}\frac{2}{d+2}\left(\frac{4\pi}{p}\right)^{\frac{d}{2}}\nonumber\\
&=&\|u_0\|_1+\tilde c\left(T^{\frac{1}{p-1}}W(0)\right)^p\nonumber\\ 
&=&\|u_0\|_1+\tilde{\tilde c}\left(\int_{\R^d}{\rm e}^{-|y|^2/4T}u_0(y)\dy\right)^p\nonumber
\eea 
for some $\tilde{\tilde c}$ independent of $u_0$. Observe that the norm $\|u(t)\|_1$ increases in time. 
Therefore, by a shift of time, we have 
$$
\|u(t+T)\|_1\ge \|u(t)\|_1\left(1+\tilde{\tilde c}\|u_0\|^{p-1}\right)
$$
and it is clear that for some $t$ the norm $\|u(t)\|_1$ becomes large enough in order to condition \rf{Besov} holds with time shifted from $0$ to $t$, see also \rf{spec}.  
Therefore, $u(t)$ blows up in a finite time.  \qed
\end{remark}

\begin{remark}\label{Morrey} 
Here we discuss some questions related to applicability of Morrey spaces in the analysis of optimal conditions for local in time existence of solutions as well as for the finite time blowup of solutions. 
Recall that (homogeneous) Morrey spaces over $\R^d$ modeled on $L^q(\R^d)$, $q\ge 1$, are defined by their norms 
\be
|\!\!| u|\!\!|_{M^s_q}\equiv \left(\sup_{R>0,\, x\in\R^d}R^{d(q/s-1)} 
\int_{\{|y-x|<R\}}|u(y)|^q\dy\right)^{1/q}<\infty.\label{hMor}
\ee  

\noindent 
\emph{Caution:} the notation for Morrey spaces used elsewhere might be different, e.g. $M^s_q$ is  denoted by $M^{q,\lambda}$ with $\lambda=dq/s$ in \cite{S}. 

The most frequent situation is when $q=1$ and we consider $M^s_1\equiv M^s$. The spaces  $M^{d(p-1)/2}(\R^d)$ and $M^{d(p-1)/2}_q(\R^d)$, $q>1$, are  critical (i.e. invariant by scalings that conserve equation \rf{nlh}) in the study of equation \rf{nlh}, see \cite{S},  and we refer to \cite{B-SM,BKZ-NHM,BKZ3,Lem} for analogous examples in chemotaxis theory.  

For instance, if the norm $|\!\!| u_0|\!\!|_{M^{d(p-1)/2}_q}$  (for a number $q>1$) is small enough, then a solution of problem \rf{nlh}--\rf{ini} is global in time, see \cite[Proposition 6.1]{S} (and  for the chemotaxis system cf. \cite[Th. 1]{B-SM}).  
The former result can be proved directly (while the proof in \cite{S} was by contradiction) using the Picard iterations of the mapping 
$${\mathcal N}(u)(t)={\rm e}^{t\Delta}u_0+\int_0^t {\rm e}^{(t-\tau)\Delta}(|u|^{p-1}u)(\tau)\dta, 
$$
$u_{n+1}={\mathcal N}(u_n)$, $n=1,\,2,\, \dots$,   with $u_0\in M^{d(p-1)/2}_q(\R^d)$ ($q>1$, $p>1+2q/d$) small enough. They are convergent in the norm $\xn u(t)\xn = \sup_{t>0} t^{(1-q/r)/(p-1)}|\!\!| u(t)|\!\!|_{M^{rd(p-1)/(2q)}_r}$ for  $\max\{p,q\}<r<pq$, since $\xn {\rm e}^{t\Delta}u_0\xn <\infty$.  
\end{remark} 

\begin{remark} \label{Besov-Morrey}
Note that the quantity $\sup_{t>0}t^{\frac{1}{p-1}}\left\| {\rm e}^{t\Delta}u_0\right\|_\infty$ in \rf{Besov} is equivalent to the norm of the Besov space $B^{-\frac{2}{p-1}}_{\infty,\infty}(\R^d)$, e.g.  \cite[Remark 4.2]{S}. 
Moreover, for positive functions the quantity \rf{Besov} is equivalent to the Morrey space norm $M^{d(p-1)/2}(\R^d)$, see  \cite[Prop. 2 B)]{Lem}.  
\end{remark}

\begin{remark}\label{rate} 
Note that at the blowup time $T$ we have $\lim_{t\nearrow T}\|u(t)\|_\infty=\infty$ but some other norms --- in particular  $|\!\!| u(T)|\!\!|_{M^{d(p-1)/2}}$ --- can remain bounded, cf. also \cite[Remark 6.1(iv)]{S}. 
\end{remark} 

\bigskip

Taking into account the above remarks on the critical   Morrey space, we formulate the following partial dichotomy result 
\begin{corollary}\label{wniosek}
There exist two positive constants $c_d$ and $C_d$ such that if $p>1+\frac{2}{d}$ and for some  $q>1$  then 
\begin{itemize}
\item[(i)]
 $|\!\!| u_0|\!\!|_{M^{d(p-1)/2}_q}<c_d$ implies that problem \rf{nlh}--\rf{ini} has a global in time solution; 

\item[(ii)]
  $|\!\!| u_0|\!\!|_{M^{d(p-1)/2}}>C_d$ implies that each solution of problem \rf{nlh}--\rf{ini} blows up in a finite time.
\end{itemize}  
\end{corollary}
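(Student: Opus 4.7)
The plan is to read off both implications from results already in the paper, combined with the Morrey/Besov equivalences recorded in Remarks \ref{Morrey} and \ref{Besov-Morrey}.

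For part (i) I would invoke precisely the Picard-iteration scheme sketched in Remark \ref{Morrey} (essentially \cite[Proposition 6.1]{S}). Working in the space
\[
\mathcal{X}_T=\Bigl\{u:\ \xn u\xn=\sup_{0<t<T} t^{(1-q/r)/(p-1)}\,\mn u(t)\mn_{M^{rd(p-1)/(2q)}_r}<\infty\Bigr\}
\]
for an appropriate $r$ with $\max\{p,q\}<r<pq$, the heat semigroup provides the estimate $\xn {\rm e}^{t\Delta}u_0\xn \le K_d\,\mn u_0\mn_{M^{d(p-1)/2}_q}$, while the nonlinear term $\int_0^t {\rm e}^{(t-\tau)\Delta}(|u|^{p-1}u)\dta$ maps $\mathcal X_T$ into itself with quadratic-type estimate $\xn{\mathcal N}(u)-{\mathcal N}(v)\xn\le L\,(\xn u\xn^{p-1}+\xn v\xn^{p-1})\xn u-v\xn$. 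A standard contraction argument on a small ball then yields a global-in-time mild solution whenever $\mn u_0\mn_{M^{d(p-1)/2}_q}<c_d$ with $c_d$ depending only on $d,p,q,K_d,L$. Since $p>1+2/d$ ensures $p>1+2q/d$ for $q$ sufficiently close to $1$, the hypothesis $q>1$ suffices.

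For part (ii) I would use Remark \ref{Besov-Morrey}: for nonnegative functions the quantity $\sup_{T>0}T^{1/(p-1)}\|{\rm e}^{T\Delta}u_0\|_\infty$ is equivalent to the Morrey norm $\mn u_0\mn_{M^{d(p-1)/2}}$, with two-sided constants $\kappa_1,\kappa_2$ depending only on $d$ and $p$. Choose
\[
C_d=\frac{1}{\kappa_1}\left(\frac{1}{p-1}\right)^{\frac{1}{p-1}}.
\]
Then $\mn u_0\mn_{M^{d(p-1)/2}}>C_d$ forces $\sup_{T>0}T^{1/(p-1)}\|{\rm e}^{T\Delta}u_0\|_\infty>(1/(p-1))^{1/(p-1)}$, which is exactly condition \rf{Besov}. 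By Theorem \ref{blow}, no weak solution can be continued beyond the particular $T$ realizing the supremum, so every solution blows up in a finite time.

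The only delicate point is the equivalence used in (ii): one direction is elementary (bounding ${\rm e}^{T\Delta}u_0(0)$ by a dyadic decomposition of $u_0$ according to distance from the origin produces the Morrey norm), while the reverse direction requires the refined estimate of \cite[Prop. 2 B)]{Lem}. I would simply cite these. The choice of $r$ and the book-keeping of exponents in the Picard argument of (i) is routine but tedious; I expect it to be the most calculation-heavy step, but it is already standard in the Kato-type theory and would be relegated to references.
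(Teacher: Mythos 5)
Your proposal is correct and follows essentially the same route the paper intends: the paper gives no separate proof of Corollary \ref{wniosek}, stating only that it follows from the preceding remarks, i.e.\ part (i) from the Picard iteration of Remark \ref{Morrey} (after \cite[Proposition 6.1]{S}) and part (ii) from Theorem \ref{blow} combined with the Besov--Morrey equivalence for nonnegative functions in Remark \ref{Besov-Morrey} (via \cite[Prop. 2 B)]{Lem}). Your additional observations --- choosing $q$ close to $1$ so that $p>1+2q/d$, and taking $C_d=\kappa_1^{-1}(p-1)^{-1/(p-1)}$ --- are exactly the bookkeeping the paper leaves implicit.
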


\bigskip

Below, we determine two threshold numbers measuring how big must be either $u_0$ compared to $u_C$ or the Morrey norm $M^{d(p-1)/2}(\R^d)$ of a radial $u_0$ in order to a solution of \rf{nlh}--\rf{ini} blows up in a finite time. 
\bigskip

\begin{theorem}\label{threshold}
(i) 
Let $d\ge 3$, $p>\frac{d}{d-2}$ (so that $d-\g>0$),  and define the threshold number 
\bea
{\mathcal N}&=&\inf\{N: {\rm  solution\ with\ the\ initial\ datum\ satisfying\ }u_0(x)\ge Nu_C(x) \nonumber\\  &\quad& {\rm blows\ up\ in\ a\ finite\ time}\}.\nonumber
\eea
Then, the relation  
$${\mathcal N}\to 1\ \ {\rm as\ \ } d\to\infty$$
holds. 
More precisely, 
\bea
1\ \ < \ \ {\mathcal N}&\le& 2^{\frac{1}{p-1}}\left(d-\frac{2p}{p-1}\right)^{-\frac{1}{p-1}}\frac{\Gamma\left(\frac{d}{2}\right)}{\Gamma\left(\frac{d}{2}-\frac{1}{p-1}\right)}\label{exactN}\\
&\approx& \left(1-\frac{2p}{d(p-1)}\right)^{-\frac{1}{p-1}}\label{asN}
\eea
as $d\to\infty$. 

\noindent (ii)
Moreover, there exists $\mathcal M>0$  such that if $u_0\ge 0$ is radially symmetric and such that $|\!\!| u_0\mn2 > {\mathcal M}$,  then the solution with $u_0$ as the initial data blows up in a finite time.   
A~rough estimate from above  for the threshold value of ${\mathcal M}$  when $p>\frac{d+2}{d}$ (i.e. exactly when $d-\g>0$) is the following 
\bea
{\mathcal M}&\le&\left(\frac{1}{p-1}\right)^{\frac{1}{p-1}}(4\pi)^{\frac{d}{2}}{\rm e}^{\frac{d-\g}{2}}(2(d-\g))^{\frac{\g-d}{2}}\label{exactM}\\
&\approx&
\left(\frac{1}{p-1}\right)^{\frac{1}{p-1}}\s_d\,\sqrt{\frac{\pi}{2}}\left(\frac{d}{2}-\frac{1}{p-1}\right)^{\frac{1}{p-1}+\frac12}\label{asM}
\eea
as $d\to\infty$. 
\end{theorem}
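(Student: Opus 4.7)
The plan is to deduce both parts by applying the blowup criterion \rf{blo} of Theorem \ref{blow} to carefully chosen initial data, reducing each claim to an estimate of $T^{1/(p-1)}{\rm e}^{T\Delta}u_0(0)$ against the universal threshold $(1/(p-1))^{1/(p-1)}$.

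For part (i), I would take $u_0=Nu_C$ and compute ${\rm e}^{T\Delta}u_C(0)$ in closed form. The substitution $y=\sqrt{T}z$ together with polar coordinates gives
\[
{\rm e}^{T\Delta}u_C(0)=cT^{-\gamma/2}(4\pi)^{-d/2}\int_{\R^d}|z|^{-\gamma}{\rm e}^{-|z|^2/4}\,{\rm d}z,
\]
and the Gaussian integral evaluates to $2^{d-\gamma}\pi^{d/2}\Gamma((d-\gamma)/2)/\Gamma(d/2)$. The self-similar structure of $u_C$ is reflected in the fact that $T^{1/(p-1)}{\rm e}^{T\Delta}(Nu_C)(0)$ is a constant in $T$. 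Requiring this constant to exceed $(1/(p-1))^{1/(p-1)}$ and substituting $c^{p-1}=\gamma(d-2p/(p-1))$ from \rf{c=} produces exactly the right-hand side of \rf{exactN}. The lower bound $1<\mathcal{N}$ reflects that $u_0=u_C$ is stationary and so $N=1$ lies outside the admissible set, while \rf{asN} follows from Stirling's formula via $\Gamma(d/2)/\Gamma(d/2-1/(p-1))\sim(d/2)^{1/(p-1)}$ as $d\to\infty$.

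For part (ii), the task is to bound ${\rm e}^{T\Delta}u_0(0)$ from below by the Morrey norm and then optimize in $T$. For radial $u_0\ge 0$, setting $F(R)=\int_{|y|<R}u_0(y)\dy$ and integrating by parts in polar coordinates gives
\[
{\rm e}^{T\Delta}u_0(0)=(4\pi T)^{-d/2}\int_0^\infty F(R)\,\frac{R}{2T}\,{\rm e}^{-R^2/(4T)}\,{\rm d}R.
\]
Choosing $R_*$ almost attaining $\sup_{R>0}R^{\gamma-d}F(R)$ and using monotonicity of $F$ to retain only the tail $R\ge R_*$ produces the lower bound $(4\pi T)^{-d/2}F(R_*){\rm e}^{-R_*^2/(4T)}$. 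The scaling choice $T=\beta R_*^2$ eliminates $R_*$ entirely, and minimizing $\beta^{(d-\gamma)/2}{\rm e}^{1/(4\beta)}$ in $\beta$ gives $\beta_*=1/(2(d-\gamma))$; inserting this optimum into \rf{blo} yields \rf{exactM}, with a final application of Stirling supplying \rf{asM}.

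The main obstacle lies in part (ii): the Morrey norm is defined as a supremum over arbitrary centers, whereas the argument above only controls balls centered at the origin. For radially non-increasing $u_0$ the two quantities coincide, but for a general radial $u_0\ge 0$ one must use the inclusion $B(x_0,R)\subset B(0,|x_0|+R)$ (together with a dual annular bound when $R<|x_0|$) to absorb the discrepancy into a dimensional factor, which only degrades the value of $\mathcal M$ but not its asymptotic shape. A secondary concern is the careful Stirling bookkeeping required to rewrite $(4\pi)^{d/2}{\rm e}^{(d-\gamma)/2}(2(d-\gamma))^{(\gamma-d)/2}$ in the form \rf{asM} involving $\sigma_d$, with $\gamma$ held fixed as $d\to\infty$.
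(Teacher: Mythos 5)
Your proposal follows the paper's proof essentially verbatim: part (i) is the same closed-form evaluation showing $T^{\frac{1}{p-1}}{\rm e}^{T\Delta}u_C(0)=c\,2^{-\gamma}\Gamma\left(\frac{d-\gamma}{2}\right)/\Gamma\left(\frac{d}{2}\right)$ is constant in $T$ and is then fed into the blowup criterion together with $c^{p-1}=\gamma\left(d-\frac{2p}{p-1}\right)$, and part (ii) is the same integration by parts of the radial distribution function followed by optimization at $\frac{R^2}{4T}=\frac{d-\gamma}{2}$, which makes $R$ cancel and yields \rf{exactM}. The one point of divergence is that the ``main obstacle'' you flag --- passing from the uncentered Morrey supremum to balls centered at the origin --- is dispatched in the paper by the bare assertion that the two suprema coincide for radial $u_0\ge 0$, so your caution there is, if anything, more scrupulous than the source.
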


\begin{proof}
(i) We compute 
\bea 
t^{\frac{1}{p-1}}(4\pi t)^{-\frac{d}{2}}\int |x|^{-\g}\exp\left(-\frac{|x|^2}{4t}\right)\dx 
&=& 2^{-\g-1}\pi^{-\frac{d}{2}}\s_d\int_0^\infty {\rm e}^{-z}z^{\frac{d-\g}{2}-1}\,{\rm d}z \nonumber\\
&=& 2^{-\g}\frac{\Gamma\left(\frac{d-\g}{2}\right)}{\Gamma\left(\frac{d}{2}\right)}, \label{2asy}
\eea
where we used the fact  that  the area $\s_d$ of the unit sphere in $\R^d$ is equal to 
\be
\s_d=\frac{2\pi^{\frac{d}{2}}}{\Gamma\left(\frac{d}{2}\right)}, \label{pole}
\ee 
and we obtain  \rf{exactN}. The asymptotic formula in \rf{asN} is a  consequence of the Stirling formula for the Gamma function 
 \be
\Gamma(z+1)\approx \sqrt{2\pi z}\, z^z{\rm e}^{-z}\ \ \ {\rm and}\ \ \  \frac{\Gamma(z+a)}{\Gamma(z)}\approx z^a\ \ {\rm as}\ \ z\to\infty,\label{Stir}
\ee
see e.g. \cite{TE}. 
Indeed, 
 $${\mathcal N}\approx 2^{\frac{1}{p-1}}\left(d-\frac{2p}{p-1}\right)^{-\frac{1}{p-1}}\frac{\Gamma\left(\frac{d}{2}\right)}{\Gamma\left(\frac{d}{2}-\frac{1}{p-1}\right)}\approx 2^{\frac{1}{p-1}}\left(d-\frac{2p}{p-1}\right)^{-\frac{1}{p-1}}\left(\frac{d}{2}\right)^{\frac{1}{p-1}}$$ holds,
i.e. the discrepancy between the multiples of $|x|^{-\frac{2}{p-1}}$ in the initial data corresponding to global and blowing up solutions  
 converges   to $1$ as $d\to\infty$. 
\bigskip

(ii) Define for a radially symmetric function $u_0\ge 0$ its radial distribution function 
$$
M(r)=\s_d\int_0^r u_0(\r)\r^{d-1} \,{\rm d}\r,  
$$
so that $\s_du_0(r)r^{d-1}=M_r(r)$.  
It is not hard to see that for radial $u_0\ge 0$ we have $|\!\!|u_0\mn2=\sup_{R>0}R^{\g-d}\int_{\{|x|<R\}}u_0(x)\dx$. 
If $u_0\in M^{d(p-1)/2}(\R^d)$ with $|\!\!|u_0\mn2={\mathcal M}$, then for each $\e>0$ there exists $R>0$ such that $M(R)\ge ({\mathcal M}-\e)R^{d-\g}$.  
We need to estimate $T^{\frac{1}{p-1}}W(0)=T^{\frac{1}{p-1}}{\rm e}^{T\Delta}u_0(0)$ from below, and we do it in a rough way as
\bea 
T^{\frac{1}{p-1}}W(0)&=& T^{\frac{1}{p-1}}\int_0^\infty (4\pi T)^{-\frac{d}{2}}{\rm e}^{-r^2/4T}M_r(r)\dr \nonumber\\
&\ge & T^{\frac{1}{p-1}-\frac{d}{2}} (4\pi)^{-\frac{d}{2}}\int_0^R {\rm e}^{-r^2/4T}M_r(r)\dr \nonumber\\
&=& T^{\frac{1}{p-1}-\frac{d}{2}} (4\pi)^{-\frac{d}{2}}\left( {\rm e}^{-R^2/4T}M(R)+\int_0^R {\rm e}^{-r^2/4T}\frac{r}{2T}M(r)\dr \right)\nonumber\\
&\ge&T^{\frac{1}{p-1}-\frac{d}{2}} (4\pi)^{-\frac{d}{2}} ({\mathcal M}-\e)R^{d-\g}{\rm e}^{-R^2/4T}.\label{M1}
\eea
Since $\max_{T>0}T^{\frac{1}{p-1}-\frac{d}{2}}{\rm e}^{-R^2/4T}$ is attained when $\frac{R^2}{4T}=\frac12(d-\g)$ and this quantity is equal to $\left(\frac{R^2{\rm e}}{2(d-\g)}\right)^{\frac{\g-d}{2}}$, we need to determine when 
$$
(4\pi)^{-\frac{d}{2}}({\mathcal M}-\e)\left(\frac{2(d-\g)}{{\rm e}}\right)^{\frac{d-\g}{2}}
> \left(\frac{1}{p-1}\right)^{\frac{1}{p-1}}.
$$
This leads, by \rf{pole} and \rf{Stir},  to 
\bea 
{\mathcal M}&\gtrsim&2^\g\left(\frac{1}{p-1}\right)^{\frac{1}{p-1}}\frac{\s_d}{2}\frac{\Gamma\left(\frac{d}{2}\right)}{\Gamma\left(\frac{d-\g}{2}+1\right)}\sqrt{2\pi}\sqrt{\frac{d-\g}{2}}\nonumber\\ 
&\approx& 2^\g\left(\frac{1}{p-1}\right)^{\frac{1}{p-1}}\s_d \left(\frac{d-\g}{2}\right)^{\frac{\g-1}{2}}\sqrt{\frac{\pi}{2}}. 
\eea
Note that, compared to $|\!\!|u_C\mn2=\frac{c\s_d}{d-\g}$, the number  ${\mathcal M}$ is much bigger:
$$
 \frac{\mathcal M}{|\!\!|u_C\mn2}\approx \frac{\left(\frac{d-\g}{2}\right)^{\frac{\g+1}{2}}}{\left(\frac{d-p\g}{2}\right)^{\frac{\g}{2}}}\sqrt{2\pi} \approx\sqrt{\pi d},
$$
and, of course, ${\mathcal M}\le C_d$ by the definition of $C_d$ in Corollary \ref{wniosek} (ii). 
\end{proof}

\bigskip

\begin{remark}\label{dane}
It is well known, cf. \cite{LNi} and \cite[Th. 17.12]{QS}, that (under some supplementary assumptions) if 
$\lim_{|x|\to\infty}u_0(x)|x|^{\frac{2}{p-1}}=\infty$, then solutions of \rf{nlh}--\rf{ini} cannot be global in time. 
On the other hand, the authors of \cite{PY2} showed that for some  $u_0(x)\sim |x|^{-\frac{2}{p-1}}$ global in time solutions of \rf{nlh}--\rf{ini} exist and are unbounded as $t\to\infty$. 

A very short proof of the former result follows from the observation that such an  initial condition $u_0$ satisfies $|\!\!| u_0\mn2=\infty$, hence condition \rf{Besov} holds.  
Indeed, inequality
$$
\int_{\{|x|\le R\}}u_0(x)\dx\ge C\int_{R_0}^Rr^{-\g+d-1}\dr\approx CR^{d-\g}
$$ 
is satisfied with arbitrarily large constants $C$ and suitably big fixed $R_0$, so that 
\newline
 $\lim_{R\to\infty}R^{\g-d}\int _{|x|\le R}u_0(x)\dx \ge C$. 
\end{remark} 

\begin{remark}\label{nonexistence}
If  $u_0\ge 0$ is such that $\limsup_{t\searrow 0}t^{\frac{1}{p-1}}\left\| {\rm e}^{t\Delta}u_0\right\|_\infty>  \left(\frac{1}{p-1}\right)^{\frac{1}{p-1}}$, then the statement: \emph{ there exists $T>0$ and a solution of problem \rf{nlh}--\rf{ini} on $(0,T)$}  is not true. 
One is tempted to say that there is an instantaneous blowup of solution with such an initial data, but the correct statement is rather: \emph{ there is no continuity property of solutions with respect to (large) initial data in the space $M^{d(p-1)/2}(\R^d)$} when, e.g., the initial data with large $M^{d(p-1)/2}(\R^d)$ norms are truncated on levels growing to infinity. 
In other words: {\em for large initial data the problem \rf{nlh}--\rf{ini} is  ill-posed in ${\mathcal C}([0,T],M^{d(p-1)/2}(\R^d))$ for any $T>0$}. 
\end{remark}

\bigskip
\noindent 
{\bf Extensions and generalizations.}
The method of proving blowup with the use of the Gaussian moment \rf{moment-G} extends to other nonlinear equations with the heat operator in the principal part. 
For instance, the equation
$$
u_t=\Delta u+Q(x)|u|^{p-1}u
$$
considered in \cite{LX} leads to the following sufficient for blowup of its positive solutions 
$$
W(0)\ge (4\pi)^{-\frac{d}{2}}
\left(\int_0^T (p-1)(T-t)^{d(p-1)/2}\left(\int_{\R^d}Q(x)^{-\frac{1}{p-1}} {\rm e}^{-|x|^2/4(T-t)}\dx\right)^{1-p}\dt\right)^{-\frac{1}{p-1}}.$$ 
These integrals  can be calculated explicitly,   e.g. for $Q(x)=|x|^\beta$, $\beta\in\R$. 

Another example for which sufficient criteria for blowup can be easily  established with this method is the quasilinear equation 
$$
u_t=\Delta u +\nabla\cdot\left(x|u|^{p-1}u\right), 
$$
and, of course, radially symmetric solutions of the Keller-Segel system in chemotaxis theory as it was mentioned before.


\end{document}